\newcommand{\ONE}{{\mathbf{1}}}
\newcommand{\N}{{\mathbb N}}
\newcommand{\Fc}{\mathcal{F}}
\newcommand{\Pp}{\mathsf{P}}
\newcommand{\Z}{{\mathbb Z}}
\newcommand{\iid}{i.i.d.\ }
\newcommand{\E}{\mathsf{E}\,}
\newcommand{\R}{{\mathbb R}}
\renewcommand{\Mc}{{\mathcal M}}
\newcommand{\eps}{{\varepsilon}}
\newcommand{\argmax}{\mathop{\mathrm{argmax}}}
\newtheorem{theorem}{Theorem}[section]
\newtheorem{lemma}{Lemma}[section]
\numberwithin{equation}{section}
\renewenvironment{proof}[1][Proof]{
{\noindent {\sc #1: }}
}{
{{\hfill $\Box$ \smallskip}}
}
\let\orgdescriptionlabel\descriptionlabel
\renewcommand*{\descriptionlabel}[1]{%
  \let\orglabel\label
  \let\label\@gobble
  \phantomsection
  \edef\@currentlabel{#1}%
  \let\label\orglabel
  \orgdescriptionlabel{#1}%
}
\title[]{Strongly mixing smooth planar vector
  field without asymptotic directions}
\author{Yuri Bakhtin}
\address{Courant Institute of Mathematical Sciences\\ New York University \\ 251 Mercer St, New York, NY 10012 }
\author{Liying Li}
\email{bakhtin@cims.nyu.edu, liying@cims.nyu.edu}
\begin{document}

\begin{abstract}
We use a Voronoi-type tesselation based on a compound Poisson point process to construct a polynomially mixing stationary random smooth planar vector field with bounded nonnegative components such that, with probability one, none of the associated integral curves possess an asymptotic direction.
\end{abstract}

 \maketitle

 \section{Introduction and the main results}\label{sec:motivation} 
 
Let $v$ be a smooth vector field on~$\R^2$ such that for every~$z\in\R^2$, the integral curve~$\gamma_z: \R_+  \to \R^2$  (here $\R_+=[0,\infty)$) is well-defined as a unique solution of the autonomous ODE
\begin{equation}
\label{eq:ode}
\dot \gamma_z(t) = v\big( \gamma_z(t) \big),
\end{equation}
satisfying
\begin{equation}
\label{eq:init-cond-ode}
\gamma_z(0) = z.
\end{equation}

\medskip
 
Being motivated by homogenization  problems for stochastic Hamilton--Jacobi (HJ) type equations (see~\cite{Souganidis:MR1697831},\cite{Rezakhanlou-Tarver:MR1756906},\cite{Nolen-Novikov:MR2815685},\cite{Cardaliaguet-Souganidis:MR3084699},\cite{Jing-Souganidis-Tran:MR3817561}), limit shape problems in First Passage Percolation (FPP) type models (see, e.g., \cite{AHDbook:MR3729447}), and related straightness properties of random optimal paths in random environment (see \cite{Licea1996},\cite{HoNe},\cite{Wu},\cite{CaPi},\cite{Cardaliaguet-Souganidis:MR3084699},\cite{BCK:MR3110798},\cite{kickb:bakhtin2016}),
in \cite{Bakhtin-Li-weakly-mixing}, we raised the problem of conditions on a stationary random smooth vector field $v$ that would guarantee that with probability $1$ the asymptotic direction $\lim_{t\to\infty} (\gamma_z(t)/t)$ is well-defined for all $z\in\R^2$.

A simple argument based on the strong law of large numbers implies that such a straightness statement holds for vector fields $v$ with bounded nonnegative components and finite dependence range. However, it is not clear how much the finite dependence range requirement can be relaxed. In \cite{Bakhtin-Li-weakly-mixing} we constructed an example of a {\it weakly mixing} stationary random $2$-dimensional vector field $v$ 
with nonnegative components such that,
with probability $1$, the following holds for all
$z\in\R^2$:
\begin{equation}
\label{eq:trajects-go-to-infty}
\lim_{t\to \infty}|\gamma_z(t)|=\infty,
\end{equation}
 \begin{equation}\label{eq:integral-curves-no-direction}
  \liminf_{t \to \infty} \frac{\gamma^{2}_z(t)}{  \gamma^{1}_z (t) } = 0, \quad
  \limsup_{t \to \infty} \frac{\gamma^{2}_z(t) }{  \gamma^{1}_z (t)} = \infty.
\end{equation}
In other words, with probability one, none of the integral curves defined by this vector field have an asymptotic direction. Instead, they sweep through a cone of partial asymptotic directions.

The goal of this note is to give an example of a a {\it strongly mixing} vector field with no asymptotic directions.

Before we state our result, let us remark that
the construction in \cite{Bakhtin-Li-weakly-mixing} was based on a modification of the discrete lattice example from~\cite{2016arXiv161200434C}, with similar properties and thus it has only the weak mixing property and not the strong one. 
Moreover, due to the product nature of the construction in \cite{Bakhtin-Li-weakly-mixing} (which means that random transformations are applied independently to both components), applying it
straightforwardly, even to strongly $\Z^2$-mixing lattice fields with similar properties
like that from~\cite{Bramson-Zeitouni-Zerner:MR2243869},
produces random vector fields that are not strongly mixing with respect to the action of $\R^2$. 
The strongly mixing example we give in this note, allows for analysis in the spirit of \cite{Ziliotto:MR3684310}.
Our example is also related to the homogenization problem of the non-convex Hamilton--Jacobi equation studied in \cite{FSHomogenizationNonhomogenizationCertain2017,FFZExampleFailureStochastic2021}.

To state our main result, we denote the two components of $v\in\R^2$ by  $v^1$ and $v^2$.
\begin{theorem}\label{th:no-average-slope}
There is a strongly mixing stationary smooth vector field $v$ on $\R^2$ such that with probability~$1$, for all $z\in\R^2$, 
\begin{equation}
\label{eq:up-right}
v^1(z) , v^2(z)\ge 0, \quad v^1(z) + v^2(z)= 1,
\end{equation}
and identities \eqref{eq:trajects-go-to-infty}, \eqref{eq:integral-curves-no-direction} hold.
\end{theorem}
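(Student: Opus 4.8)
The plan is to read $v$ off a compound (marked) Poisson point process through a family of long, thin ``channel'' features, and then to verify the two nontrivial points --- strong mixing, and the almost sure absence of asymptotic directions for every trajectory --- separately. Let $\omega=\{(x_i,\rho_i,\tau_i)\}$ be a Poisson process on $\R^2\times[2,\infty)\times\{H,V\}$ with intensity $\lambda\,dx\otimes F(d\rho)\otimes\tfrac12(\delta_H+\delta_V)$, where $F$ has tail $\bar F(\rho)=F((\rho,\infty))\asymp c/(\rho\log^2\rho)$. Attach to each atom an axis-parallel rectangle of length $\rho_i$ and fixed width $w$ anchored at $x_i$, horizontal if $\tau_i=H$ and vertical if $\tau_i=V$, surrounded by an $O(1)$ buffer. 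Let $\phi\colon\R^2\to[0,1]$ be smooth, equal to $\tfrac12$ off all channels, equal to $\delta_{\rho_i}:=w/\rho_i$ inside a horizontal channel and to $1-\delta_{\rho_i}$ inside a vertical one, with the profiles interpolated smoothly through the buffers; overlaps of channels are resolved in a translation-equivariant, locally finite way using a Voronoi-type tessellation of $\{x_i\}$ (each atom ``owns'' a cell, and $\phi$ near $z$ is determined by the channels whose owners' cells meet a bounded neighbourhood of $z$). Put $v=(1-\phi,\phi)$. Then \eqref{eq:up-right} holds, $v$ is smooth and bounded, hence generates a complete flow with unique trajectories, and \eqref{eq:trajects-go-to-infty} is automatic since $\gamma_z^1(t)+\gamma_z^2(t)=t+\gamma_z^1(0)+\gamma_z^2(0)$. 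Stationarity follows from translation-invariance of $\omega$ and equivariance of the recipe.

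Since finite dependence range forces the asymptotic direction to exist (strong law of large numbers), the channel lengths must be unbounded and the construction has infinite range; strong ($\alpha$-)mixing nonetheless holds. The restrictions $v|_E$ and $v|_F$ are functions of disjoint parts of $\omega$ unless some channel's bounding box meets both $E$ and $F$; because channels are axis-parallel of width $w$, for $\mathrm{dist}(E,F)=d$ large this happens only if $E,F$ are essentially axis-aligned and a channel of length $\ge d$ bridges them, which has probability $\lesssim\lambda\,\E[(N-d)_+]=\lambda\int_d^\infty\bar F$ with $N\sim F$. Coupling $\omega$ over the two regions to an independent pair off these rare bridging configurations gives $\alpha(d)\lesssim\int_d^\infty\bar F(\rho)\,d\rho\to0$, the point being that $\bar F$ is integrable, i.e.\ $\E[N]<\infty$; the choice $\bar F(\rho)\asymp c/(\rho\log^2\rho)$ yields the (slow) rate $\alpha(d)\asymp1/\log d$. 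This tail is chosen because it is \emph{just barely} integrable: light enough for mixing, heavy enough to make the hitting series below diverge.

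It then suffices to show that, almost surely, for every $z$ the curve $\gamma_z$ enters infinitely many horizontal channels of length $\ge2n\log n$ while $\gamma_z^2=n+O(1)$, and, symmetrically, infinitely many vertical channels of length $\ge2m\log m$ while $\gamma_z^1=m+O(1)$. Indeed, once $\gamma_z$ is inside a horizontal channel of length $\rho$ the field there equals $(1-\delta_\rho,\delta_\rho)$ with $\delta_\rho=w/\rho$, so by comparison of ODEs the curve cannot leave the channel before its right end, gaining $\ge\rho/2$ in $\gamma_z^1$ and at most $w$ in $\gamma_z^2$; after such a visit $\gamma_z^2/\gamma_z^1\lesssim(n+w)/(n\log n)\to0$, so $\liminf\gamma_z^2/\gamma_z^1=0$, and the mirror statement gives $\limsup\gamma_z^2/\gamma_z^1=\infty$, which is \eqref{eq:integral-curves-no-direction}. (Here we use that $\gamma_z^1\to\infty$ and $\gamma_z^2\to\infty$ always, since off the channels $v=(\tfrac12,\tfrac12)$.)

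The remaining and most delicate step is the hitting statement. Fix $z$. For each large integer $n$ the monotone curve $\gamma_z$ spans the height band $[n,n+1)$ over a single $x$-interval; call the $n$-th attempt successful if some horizontal channel of length $\ge2n\log n$ is anchored just ahead of the curve at height $\approx n$ so that the curve is forced into it with at least half its length remaining. A channel of length $\rho$ supplies an arrival window of $x$-width $\asymp\rho$ and $y$-width $\asymp w$, so the $F$-expected number of channels that can make the $n$-th attempt succeed is $\asymp\lambda\int_{2n\log n}^\infty\rho\,F(d\rho)\asymp\lambda/\log(n\log n)$, and $\sum_n1/\log(n\log n)=\infty$. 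The obstacle is that the attempts are not independent and the region probed at attempt $n$ is itself a functional of $\omega$ through $\gamma_z$. One resolves this by a Borel--Cantelli argument of the second kind run along the filtration that reveals $\omega$ in the order $\gamma_z$ explores the plane: count only channels whose anchor lies strictly ahead of (to the right of, resp.\ above) the part of the plane that has so far influenced $\gamma_z$; the extremely sparse high-$\rho$ atoms in that forward region are conditionally independent of the past, still yield a diverging conditional sum, and their capturing effect is a.s.\ not spoiled by the necessarily rarer other large features nearby. Hence infinitely many attempts succeed a.s.; a union bound over a countable dense set of starting points, together with continuity of $z\mapsto\gamma_z$ and robustness of the capture mechanism, promotes this to all $z$. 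I expect the bulk of the work to be exactly this step: making ``fresh randomness strictly ahead of the explored region'' precise, bounding how much of the past a would-be capturing channel is allowed to touch, and propagating the argument cleanly between the horizontal and vertical phases.
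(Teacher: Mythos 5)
Your construction is in the same family as the paper's: channels read off a marked Poisson process, with heavy-tailed lengths to make the long channels rare enough for mixing yet frequent enough to derail any asymptotic direction, overlaps resolved in a translation-equivariant way, and the final promotion from a good dense set of starting points to all $z\in\R^2$ via monotonicity/continuity of the flow. The mixing argument (bound the coupling error by the probability that some long channel bridges the two far-apart windows, reducing to integrability of $\bar F$) is essentially the same as the paper's estimate of $\mu(\Lambda_0)$ in Lemma~\ref{lem:strong-mixing}, though you state it as an $\alpha$-mixing rate. The paper's overlap resolution, incidentally, is a Voronoi tessellation in the \emph{strength} coordinate $\xi$, not in space, which is what makes the successor mechanism below work cleanly.

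The substantive gap is exactly where you flag it: ``the bulk of the work'' is the hitting statement, and the Borel--Cantelli-of-the-second-kind sketch does not yet close. Several obstacles are real. (i) Because the vector field near $\gamma_z(t)$ is influenced by \emph{all} channels whose rectangles cover a neighbourhood of the current point --- including channels anchored far behind or far ahead along either axis --- the event ``$\gamma_z$ is at position $p$ at the start of the $n$-th attempt'' is not measurable with respect to $\omega$ restricted to the ``already-explored'' region, and the alleged conditional independence of the fresh high-$\rho$ atoms from the past must be defined and proved, not asserted. (ii) Your attempts are indexed by height $n$, but while captured by a horizontal channel the curve crosses many heights $n$ without making any well-defined attempt; and between captures the curve can be repeatedly deflected by short channels, so the conditional probability of a successful $n$-th attempt can be $0$ on configurations of positive probability, which defeats BC-II, whose hypothesis is $\sum_n\Pp(A_n\mid\Fc_{n-1})=\infty$ \emph{a.s.} (iii) You are in effect trying to prove the stronger claim that \emph{every} starting point is, a.s., itself a good starting point; the paper deliberately avoids this.

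The paper's route is genuinely different at this key step. Rather than showing the trajectory is a.s.\ captured infinitely often, it constructs a deterministic recursive ``successor'' mechanism: starting from a channel $D_{k_0}$ that owns a neighbourhood of $y$, the successor $D_{k_{n+1}}$ is the first channel (in the direction of travel) that \emph{completely blocks} $D_{k_n}$, necessarily perpendicular to it and of strictly larger strength. Because the strength is a single real number and ties are a.s.\ impossible, the successor is uniquely determined, the lengths and strengths $(\xi_{k_n},L_n)$ form an explicit Markov chain (Lemma~\ref{lem:Xn-is-Markov-chain}), and the event $A_{\infty,y}$ of an infinite chain has \emph{positive} probability (Theorem~\ref{thm:positive-probability-of-A}, via a lower bound on an infinite product as in Lemmas~\ref{lem:compute-extintion-prob}--\ref{lem:positivity-of-h}). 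The conditional independence issue you worry about is handled by a two-dimensional stopping-time framework (Lemmas~\ref{lem:compare-stopping-time}, \ref{lem:strong-markov-property}) adapted to the up-right monotonicity of the curve: one conditions on $\omega$ restricted to $\{x\prec z\}$ at the random corner $z=Z_n$, and the fresh part lives in the quadrant $H$. Positive probability plus ergodicity then gives a.s.\ a dense set of good $y$, and the squeeze argument you also sketch finishes. So: your outline and intuition for where the randomness must be ``ahead'' are sound, but the unproved BC-II step is replaced in the paper by a different mechanism that bypasses it entirely, and filling your gap would require a nontrivial argument not contained in the proposal.
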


This theorem means that mixing is not enough to guarantee the asymptotic straightness of integral curves. In 
 Lemma~\ref{lem:strong-mixing}, we  actually
show that a polynomial estimate on the mixing rate holds for our example.  Probably there are stronger conditions on the rate of mixing sufficient for straightness but this question remains open.

Our vector field, similarly to the previous examples from \cite{Bakhtin-Li-weakly-mixing}, \cite{2016arXiv161200434C}, and their FPP predecessor~\cite{Haggstrom-Meester:MR1379157}, traps
the integral curves in long narrow channels each stretched along one of the extreme directions, so that the
curves oscillate between these two directions never settling on any specific one. 

In our new example, the construction of these channels is based on a Voronoi-type tesselation
of the plane with centers of influence at Poissonian points. Each Poissonian point is equipped  with a  rectangular domain of influence, a narrow channel with heavy-tailed random length, and an additional random strength parameter that helps to decide which influence wins in the case of channel overlaps.

We describe our construction and prove the strong mixing property in Section~\ref{sec:construction}.
We study the flow generated by our random vector field in Section~\ref{sec:flow-asymptotics}. 
In Section \ref{sec:discussion}, we give a discussion of our model and its comparison to \cite{Ziliotto:MR3684310}.

{\bf Acknowledgments.} In the first version of the paper, the proof that identities \eqref{eq:trajects-go-to-infty}, \eqref{eq:integral-curves-no-direction} hold for our example was quite involved. We
would like to thank the anonymous referee whose remark stimulated us to find a short efficient argument based on the idea from~\cite{Ziliotto:MR3684310}.  YB is grateful to NSF for partial support via grant DMS-1811444.

\section{Construction and Strong mixing}
\label{sec:construction}
Our construction is based on a Poissonian point field.  Let $(\Omega_0, \Fc_0, \Pp_0)$ be a complete probability space, where $\Omega_0$ is identified as the space of all locally finite configurations
$\omega= \{\eta_i = (x_i,  r_i, \xi_i , \sigma_i), i \in \N\}$ in $\mathcal{X} = \R^2 \times\R \times
\R \times \Sigma$ where~${\Sigma = \{ 1,2 \}}$. 
Configurations $\omega$ are sets, with no canonical enumeration. As usual, we use an arbitrary 
enumeration for convenience.

The~$\sigma$-algebra $\Fc_0$ is
generated by all the maps~$\omega \mapsto n(\omega \cap B)$,
where $B$ is any bounded Borel set in $\mathcal{X}$ and $n(\cdot)$ counts the
number of points in a set.
The measure~$\Pp_0$ is the distribution of a Poisson point field with the following intensity~$\mu$:
\begin{equation}
  \label{eq:intensity-mu}
  \mu(dx \times d r  \times d\xi  \times d\sigma)
  =\frac{1}{2}    \frac{\alpha e^{-r}}{\xi^{\alpha+1}} \ONE_{\{r \ge 0,\ \xi \ge 1\}}\, dx\, dr\, d\xi\, d \sigma
  := f(x, \sigma, r, \xi)\, dx\, dr\,d\xi\, d \sigma.
\end{equation}
where~$1 < \alpha \le 2$ is a fixed number, and on the right hand side $dx, dr, d\xi$ are the Lebesgue measure and
$d\sigma$  is the counting measure.
Since $\mu$ has no atoms when projected onto the $x$-component or
$\xi$-component, we see that with probability one,
\begin{equation}
\label{eq:3}
x_i \neq x_j, \ \xi_i \neq \xi_j, \quad i \neq j.
\end{equation}
This allows us to work on a modified probability space~$\Omega$ with full measure:
\begin{equation*}
\Omega = \{  \omega: \text{ (\ref{eq:3}) holds true} \}.
\end{equation*}
Let us denote by $\Fc$ and $\Pp$ the restriction of $\Fc_0$ and $\Pp_0$ onto $\Omega$.  From now on
we will work with the probability space $(\Omega, \Fc, \Pp)$.
We will also denote the components of~$\eta \in \mathcal{X}$ by~$x(\eta)$, $\xi(\eta)$, etc.
We can interpret this Poisson point field as a compound Poisson point process in the usual way: the spatial footprints $x_i$ form
a homogeneous Poisson point process in $\R^2$ with Lebesgue intensity; each $x_i$ is equipped with 
labels $r_i,\xi_i,\sigma_i$ that are mutually independent and independent of labels of other points, with distributions~$\mathrm{Exp}(1)$, $\mathrm{Par}(\alpha)$, and uniform on $\Sigma$. Here we denote by~$\mathrm{Exp}(\lambda)$ the exponential distribution with parameter~$\lambda>0$,
with Lebesgue density~$\lambda e^{-\lambda r}\ONE_{\{r\ge 0\}}$, and by
~$\mathrm{Par}(\alpha)$ the Pareto distribution with parameter~$\alpha$, with density~$\frac{a}{t^{a+1}}\ONE_{\{ t\ge 1 \}}$.
We refer to \cite[Section 6]{daley_introduction_2003} for the background on compound Poisson processes.

In the rest of the section we will construct a random vector field given any
fixed configuration $\omega$.
Let $e_1, e_2$ be the standard basis in~$\R^2$.  We often write~${x=(x^1,x^2)}$ for a point in~$\R^2$.
For each~$\eta_i \in \omega$, let us associate with $x_i$ a \textit{domain of influence}~$D_i$, which is a rectangle of length
$r_i \xi_i$ and width $1$ in the direction of~$e_{\sigma_i}$.  More precisely, 
we define 
\begin{align*}
  \mathrm{D}: \mathcal{X} &\longrightarrow \text{ rectangles in $\R^2$, }\\
  \eta = (x^1, x^2, r, \xi , \sigma) & \longmapsto   
\begin{cases}
[x^1, x^1 + r\xi] \times [x^2, x^2+1], &  \sigma = 1,  \\
[x^1, x^1 + 1] \times [x^2, x^2 + r\xi],   & \sigma = 2.  \\
\end{cases}
\end{align*}
and let $D_i = \mathrm{D}(\eta_i)$. We call $\eta_i$ the base point and~$\xi_i$ the strength of the
domain~$D_i$.
For any region~$R \subset \R^2$, we also define~$\mathrm{D}^{-1}(R) \subset \mathcal{X}$ as
\begin{equation*}
\mathrm{D}^{-1}(R) = \{  \eta \in \mathcal{X}: \mathrm{D}(\eta) \cap R \neq \varnothing\}.
\end{equation*}
\begin{lemma}
\label{lem:finite-number-of-influence-region}
With probability one, every bounded set in $\R^2$ intersects with a finite number of domains of
influence.
\end{lemma}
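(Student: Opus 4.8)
The plan is to reduce the statement to a Borel--Cantelli argument over a covering of the plane by unit squares. By translation invariance and countability it suffices to show that, with probability one, only finitely many domains of influence intersect a fixed bounded set, say the unit square $Q = [0,1]^2$; the general case follows by covering a bounded set with finitely many integer-translated copies of $Q$ and using countable subadditivity over all such translates. So I will focus on estimating the number of $\eta_i \in \omega$ with $\mathrm{D}(\eta_i) \cap Q \neq \varnothing$, i.e.\ the number of Poisson points landing in $\mathrm{D}^{-1}(Q)$.

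The key step is to bound the $\mu$-measure of $\mathrm{D}^{-1}(Q)$ and show it is finite; then $n(\omega \cap \mathrm{D}^{-1}(Q))$ is a Poisson random variable with finite mean, hence finite almost surely. To estimate $\mu(\mathrm{D}^{-1}(Q))$, I would split according to $\sigma$; by symmetry consider $\sigma = 1$, so that $\mathrm{D}(\eta) = [x^1, x^1 + r\xi] \times [x^2, x^2+1]$. For this rectangle to meet $Q$ we need $x^2 \in [-1,1]$ (an interval of bounded length in the $x^2$ variable), and in the $x^1$ variable we need $x^1 \le 1$ and $x^1 + r\xi \ge 0$, i.e.\ $x^1 \in [-r\xi, 1]$, an interval of length at most $r\xi + 1$. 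Integrating the density $\tfrac12 \alpha e^{-r}\xi^{-\alpha-1}$ against $dx^1\,dx^2$ over this region gives, up to constants,
\begin{equation*}
\int_1^\infty \int_0^\infty (r\xi + 1)\, e^{-r}\, \xi^{-\alpha-1}\, dr\, d\xi
\;\lesssim\; \int_1^\infty (\xi + 1)\, \xi^{-\alpha-1}\, d\xi,
\end{equation*}
which converges precisely because $\alpha > 1$: the $\xi^{1-\alpha-1} = \xi^{-\alpha}$ term integrates to something finite at infinity since $\alpha > 1$, and the $\xi^{-\alpha-1}$ term is even more integrable. The same bound holds for $\sigma = 2$ by symmetry, so $\mu(\mathrm{D}^{-1}(Q)) < \infty$.

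Having established finiteness of the mean, I conclude that $n(\omega \cap \mathrm{D}^{-1}(Q)) < \infty$ almost surely, and then take the intersection over all translates $Q + (m,n)$, $(m,n)\in\Z^2$, to get a single full-measure event on which every integer unit square, hence every bounded set, meets only finitely many domains of influence. The main (and only mildly delicate) obstacle is simply checking that the $\xi$-integral converges, which is exactly where the hypothesis $1 < \alpha$ is used; the heavy tail with $\alpha < 2$ is harmless here because the relevant power that appears after integrating out the linear length factor $r\xi$ is $\xi^{-\alpha}$, still integrable at infinity. Everything else is routine: bounding the geometric footprint of a rectangle meeting $Q$, Fubini, and Borel--Cantelli-style countable unions.
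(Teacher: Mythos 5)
Your proof is correct and takes essentially the same route as the paper: reduce to a single unit square via countable union, show $\mu(\mathrm{D}^{-1}(Q))<\infty$ by a direct integration using $\alpha>1$, and conclude that the Poisson count in $\mathrm{D}^{-1}(Q)$ is a.s.\ finite. The only cosmetic difference is the order of integration (you integrate out $x^1$ first to get the length factor $r\xi+1$, whereas the paper integrates $r$ after $x^1$); both lead to the same $\int_1^\infty \xi^{-\alpha}\,d\xi$ convergence criterion.
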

\begin{proof}
  It suffices to show that for all~$m,n  \in \Z$, with probability one the unit square~${R = [m,m+1]}
  {\times [n,n+1]}$
  intersects with a finite number of $D_i$'s.  This is equivalent to 
$\mu( \mathrm{D}^{-1}(R)) < \infty$.
Without loss of generality let us assume~$R = [0,1]^2$.  We have 
\begin{multline*}
\mathrm{D}^{-1}(R) = \{\eta = (x^1, x^2, r, \xi, \sigma): \sigma = 2,\ x^2 \le 1,\ -1 \le x^1 \le 1,\
0 \le x^2 + r\xi  \} \\
\cup
\{\eta = (x^1, x^2, r, \xi, \sigma): \sigma = 1,\ x^1 \le 1,\ -1 \le x^2 \le 1,\ 0 \le x^1 + r\xi  \}
\end{multline*}
and
\begin{align*}
\mu ( \mathrm{D}^{-1}(R)) &= 2\int_{\{ \sigma = 2,\, x^2 \le 1,\, -1 \le x^1 \le 1,\, 0 \le x^2 + r\xi  \}}
                            f(x,r,\xi,\sigma) dx\,dr\, d\xi\, d\sigma \\
                          &= \int_{-1}^1 dx^1 \int_{-\infty}^1 dx^2 \int_1^{\infty} \frac{\alpha}{ \xi^{\alpha+1}}  \, d\xi
                            \int_{(-x^2)_+/\xi}^{+\infty} e^{-r} \, dr \\
  &= 2 + 2 \int_{-\infty}^0dx^2 \int_1^{\infty} \frac{\alpha}{ \xi^{\alpha+1}}  \, d\xi \cdot
    e^{\frac{x^2}{\xi}} \\
  &= 2 + 2 \int_1^{\infty} \frac{\alpha}{ \xi^{\alpha}} \, d\xi < \infty,
\end{align*}
where we used $\int_{-\infty}^{1}=\int_{-\infty}^{0}+\int_{0}^{1}$ in the third line, and~$\alpha > 1$
in the last line.  
\end{proof}

For~$\Lambda \subset \mathcal{X}$, we denote by $\mathcal{F}_{\Lambda}$ the $\sigma$-algebra generated by all the
maps~$\omega \mapsto n(\omega \cap B )$, where~$B \subset \Lambda$ is any bounded Borel set.
Let~$\Theta$ be a special element and for~$\mu(\Lambda) < \infty$ we define~$\phi(\Lambda) \in \mathcal{X} \cup \{ \Theta \}$ as
\begin{equation*}
\phi(\Lambda) = 
\begin{cases}
\Theta, & \Lambda \cap \omega = \varnothing,  \\
\argmax \{ \xi(\eta) :  \eta \in \Lambda \cap \omega \},   &  \Lambda \cap \omega \neq \varnothing.
\end{cases} 
\end{equation*}
In other words, when there is at least one Poisson point in~$\Lambda$, $\phi(\Lambda)$ gives the one with highest strength.
For convenience we also assign a strength to the special element~$\Theta$ by setting~$\xi(\Theta) = 0$.
It is clear that~$\phi(\Lambda)$ is measurable with respect to~$\Fc_{\Lambda}$.
For~$x \in \R^2$, we also abuse the notation to write~
\[\phi(x) := \phi(\mathrm{D}^{-1}(\{ x \})).\]
The meaning of~$\phi$ should be clear from the context.

Let~$\rho$ be a smooth probability density supported on $[-1/3,0]^2$.
The desired vector field is constructed as a convolution~$ v = \rho* \tilde{v}$, where
\begin{equation*}
\tilde{v}(x) = 
\begin{cases}
  e_{\sigma(\phi(x))}, & \phi(x) \neq \Theta\\
  \frac{1}{2} (e_1 + e_2), & \phi(x) = \Theta.
\end{cases}
\end{equation*}
Clearly, $\tilde{v}$ satisfies~(\ref{eq:up-right}) with~$v$ replaced
by~$\tilde{v}$.  Therefore, $v=\rho*\tilde{v}$ also satisfies~(\ref{eq:up-right}).
In the rest of this section we will 
state and prove the strong mixing property of $v$, along with a polynomial mixing rate.

For~$z \in \R^2$, let us define the shift operator~$\tilde{\theta}^z$ acting on~$\mathcal{X}$ by
\begin{equation*}
\tilde{\theta}^z (x,r,\xi,\sigma) = (x-z, r, \xi, \sigma).
\end{equation*}
This induces the shift operator~$ \theta^z \omega = \theta^z \{ \eta_i\}: = \{
\tilde{\theta}^z \eta_i \}$ defined on~$\Omega$.
Since $(\tilde{\theta}^z)_{z \in \R^2}$ preserves the measure~$\mu$,
$\{\theta^z\}_{z \in \R^2}$ is a measure-preserving $\R^2$-action on~${(\Omega,
\Fc,\Pp)}$.

We temporarily write $v(x) = v_{\omega}(x)$ to stress its dependence on the Poisson point configuration.
The map~$V: \omega \mapsto v_{\omega}(\cdot)$ is measurable from~$(\Omega, \Fc)$
to $(\Mc, \mathcal{B}(\Mc))$, where~$\Mc$ is the space of continuous vector
fields on~$\R^2$, and~$\mathcal{B}(\Mc)$ is the Borel~$\sigma$-algebra
induced by the LU metric
\begin{equation*}
d(u,v) = \sum_{n = 1}^{+\infty} \frac{ \| u - v \|_{C([-n,n]^2)} \wedge 1}{2^n},
\end{equation*}
Let $\Pp_{\Mc} = \Pp V^{-1}$ be the push-forward of~$\Pp$.  Since 
~$v_{\omega}( x) = v_{\theta^x \omega}\big( (0,0) \big)$, $\{ \theta^z \}_{z \in \R^2}$ is also
a measure preserving $\R^2$-action on $(\Mc, \mathcal{B}(\Mc), \Pp_{\Mc})$.

We will show that the $\R^2$-system~$(\{  \theta^z \}_{z \in \R^2}, \Mc,
\mathcal{B}(\Mc), \Pp_{\Mc})$ is polynomially mixing.
For $r>0$, we say that the system is \textit{polynomially mixing of order $r$} (see also~\cite{Bramson-Zeitouni-Zerner:MR2243869}\cite{Ziliotto:MR3684310}), 
if for every~$N>0$, 
\begin{equation}
    \label{eq:strong-mixing}
\limsup_{|z|_1 \to \infty} |z|_1^r \cdot  \Big|\Pp_{\Mc} ( A \cap \theta^z B) - \Pp_{\Mc} (A) \Pp_{\Mc}(B) \Big| < \infty, \quad A, B \in \mathcal{B}(\Mc_N),
\end{equation}
where $\Mc_N$ is the space of vector fields restricted to the square $L_N = [-N,N]^2$, and $|z|_1=|z^1| + |z^2|$.

We note that polynomial mixing of any order $r>0$ implies strong mixing.
In fact, since~$\mathcal{B}(\Mc) = \bigvee_{N=1}^{\infty} \mathcal{B}(\Mc_N)$, we can approximate sets in $\mathcal{B}(\Mc)$ from sets in~$\mathcal{B}(\Mc_N)$, so it follows
immediately from (\ref{eq:strong-mixing})
\begin{equation*}
\lim_{|z|_1 \to \infty} | \Pp_{\Mc} ( A \cap \theta^z B)- \Pp_{\Mc} (A) \Pp_{\Mc}(B)| = 0, \quad  A, B \in \mathcal{B}(\Mc).
\end{equation*}

\begin{lemma}
  \label{lem:strong-mixing}
The $\R^2$-system~$(\{  \theta^z \}_{z \in \R^2}, \Mc,
\mathcal{B}(\Mc), \Pp_{\Mc})$ is polynomially mixing of order~$\alpha-1$.
\end{lemma}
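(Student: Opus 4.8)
The plan is to run a standard Poisson-decoupling argument; the only genuinely nontrivial ingredient is a tail estimate that uses $\alpha>1$. First I would reduce strong mixing --- i.e. $\Pp_{\Mc}(A\cap\theta^z B)\to\Pp_{\Mc}(A)\Pp_{\Mc}(B)$ as $|z|\to\infty$ --- to the case where $A,B$ lie in the algebra $\Ac=\bigcup_{n\in\N}\sigma\bigl(v\mapsto v|_{[-n,n]^2}\bigr)$, which generates $\mathcal B(\Mc)$ for the LU metric; a routine approximation in $\Pp_{\Mc}$-measure, together with the fact that each $\theta^z$ is measure-preserving, then extends the convergence to all of $\mathcal B(\Mc)$. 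Pulling back along $V$ and using $V\circ\theta^z=\theta^z\circ V$, it suffices to prove: whenever $\tilde A,\tilde C\subset\Omega$ depend on $\omega$ only through $v_\omega|_{K}$ for one fixed bounded box $K$, then $\Pp(\tilde A\cap\theta^z\tilde C)\to\Pp(\tilde A)\Pp(\tilde C)$.

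Next I would locate the relevant portions of the configuration. Since $\rho$ is supported in $[-1/3,0]^2$, $v_\omega(x)$ is determined by $\tilde v$, hence by $\phi$, only on $x+[0,1/3]^2$, hence by $\omega\cap\mathrm D^{-1}(K')$ with $K':=K+[0,1/3]^2$. Thus $\tilde A$ is $\Fc_{\Lambda}$-measurable with $\Lambda:=\mathrm D^{-1}(K')$, and $\mu(\Lambda)<\infty$ by the computation in the proof of Lemma~\ref{lem:finite-number-of-influence-region}. Because $\mathrm D$ commutes with translations and $\mu$ is $\tilde\theta^z$-invariant, $\theta^z\tilde C$ is then $\Fc_{\Lambda_z}$-measurable with $\Lambda_z:=\mathrm D^{-1}(K'-z)$ and $\mu(\Lambda_z)=\mu(\Lambda)$.

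The heart of the argument is the claim that the overlap $\Lambda_z^o:=\Lambda\cap\Lambda_z$ satisfies $\mu(\Lambda_z^o)\to0$ as $|z|\to\infty$. Indeed, $\eta\in\Lambda_z^o$ forces the width-$1$ rectangle $\mathrm D(\eta)$ to meet both $K'$ and $K'-z$; when $|z|$ is large this is possible only if $z$ is nearly parallel to $e_{\sigma(\eta)}$ (the transverse extent of $\mathrm D(\eta)$ being $1$), and then it forces the channel length to satisfy $r(\eta)\xi(\eta)\gtrsim|z|$. Integrating the intensity over such $\eta$, the base point $x(\eta)$ is confined to a strip that is bounded in the transverse coordinate, and
\[
\int_{r\ge 0,\ \xi\ge 1}\frac{\alpha e^{-r}}{\xi^{\alpha+1}}\,\ONE_{\{r\xi\ge u\}}\,dr\,d\xi
=\int_{1}^{\infty}\frac{\alpha e^{-u/\xi}}{\xi^{\alpha+1}}\,d\xi
\le\Gamma(\alpha+1)\,u^{-\alpha},
\]
so one further integration along the channel yields $\mu(\Lambda_z^o)\le C\,(1+|z|)^{1-\alpha}$ for a constant depending only on $K$, which tends to $0$ precisely because $\alpha>1$. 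I expect this estimate --- handled uniformly over both channel orientations and all directions of $z$ --- to be the main obstacle, or rather the only real content; everything else is soft.

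Finally I would decouple on the event $E_z:=\{\omega\cap\Lambda_z^o=\varnothing\}$, for which $\Pp(E_z^c)=1-e^{-\mu(\Lambda_z^o)}\to0$. On $E_z$ the configuration in $\Lambda$ agrees with the one in $\Lambda\setminus\Lambda_z^o$, so $\ONE_{\tilde A}\ONE_{E_z}=h\,\ONE_{E_z}$ for the $\Fc_{\Lambda\setminus\Lambda_z^o}$-measurable function $h:=\ONE_{\tilde A}\circ\psi$, where $\psi$ is the map deleting all points of $\omega$ that lie in $\Lambda_z^o$; likewise $\ONE_{\theta^z\tilde C}\ONE_{E_z}=h'\,\ONE_{E_z}$ with $h'$ being $\Fc_{\Lambda_z\setminus\Lambda_z^o}$-measurable. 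Then $\|\ONE_{\tilde A}-h\|_{L^1}$ and $\|\ONE_{\theta^z\tilde C}-h'\|_{L^1}$ are both at most $\Pp(E_z^c)$, while $\E[h\,h']=\E[h]\,\E[h']$ because $\Lambda\setminus\Lambda_z^o$ and $\Lambda_z\setminus\Lambda_z^o$ are disjoint and the Poisson point field has independent restrictions to disjoint Borel sets. Combining these,
\[
\bigl|\Pp(\tilde A\cap\theta^z\tilde C)-\Pp(\tilde A)\,\Pp(\theta^z\tilde C)\bigr|\le 4\,\Pp(E_z^c)\longrightarrow 0\qquad(|z|\to\infty),
\]
and $\Pp(\theta^z\tilde C)=\Pp(\tilde C)$ by measure-preservation, which is exactly the desired convergence. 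The one point requiring care beyond bookkeeping is that the decoupling identity $\E[h\,h']=\E[h]\,\E[h']$ needs honestly disjoint pieces of $\mathcal X$, which is why the overlap region $\Lambda_z^o$ is deleted outright rather than merely shown to be small.
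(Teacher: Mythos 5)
Your proof is correct and follows essentially the same route as the paper: reduce to cylinder events, observe that such an event depends only on $\omega$ restricted to a $\mathrm{D}^{-1}$-preimage of a bounded box, show the intensity of the overlap of the two preimages vanishes as $|z|\to\infty$ (using $\alpha>1$), and decouple on the event that the overlap is empty. The only differences are cosmetic — you replace the paper's conditional-expectation bookkeeping by a deletion map and you derive a quantitative rate $\mu(\Lambda_z^o)=O(|z|^{1-\alpha})$ where the paper settles for dominated convergence — so the underlying argument is identical.
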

\begin{proof}
  We fix~$N>0$ and let~$A, B \in \mathcal{B}(\Mc_N)$.
For every $z\in\R^2$, there are functions $h$ and $g$ such that
\begin{equation*}
  \ONE_A(v_{\omega}) = h( \omega_1, \omega_0), \quad
  \ONE_{\theta^z B} (v_{\omega}) = g (\omega_2, \omega_0), 
\end{equation*}
where~$\omega_i = \omega \cap \Lambda_i$ and
\begin{equation*}
\Lambda_0 = \mathrm{D}^{-1}(L_N) \cap \tilde{\theta}^z \mathrm{D}^{-1}
(L_N), \quad
\Lambda_1 = \mathrm{D}^{-1} (L_N) \setminus \Lambda_0, \quad
\Lambda_2 = \tilde{\theta}^z \mathrm{D}^{-1} (L_N) \setminus \Lambda_0.
\end{equation*}
Here, for simplicity we suppressed the dependence  of~$g$, $h$ and~$\omega_i$'s on~$z$.
Let $\bar{h} (\omega_0) = \E \big[  h(\omega_1, \omega_0) |  \omega_0\big]$ and~$\bar{g} (\omega_0) = \E \big[  g(\omega_2, \omega_0) |  \omega_0\big]$.
By independence of~$\omega_i$'s, 
\begin{align*}
\Pp_{\mathcal{M}}(A \cap \theta^z B) & = \E h(\omega_1, \omega_0) g(\omega_2, \omega_0) = \E \bar{h}(\omega_0)
                                                 \bar{g}(\omega_0) \\
&  = \bar{h}(\varnothing) \bar{g}(\varnothing) \Pp(\omega_0 = \varnothing)+   \E \bar{h}(\omega_0)   \bar{g}(\omega_0)
    \ONE_{\omega_0 \neq \varnothing}.
\end{align*}
Using this and noting that $0 \le \bar{g}, \bar{h} \le 1$, we obtain 
\begin{equation}
\label{eq:13}
\Big|   \Pp_{\mathcal{M}}(A \cap \theta^z B) - \bar{h}(\varnothing) \bar{g}(\varnothing)
\Big| \le  2 \Pp (\omega_0 \neq \varnothing).
\end{equation}
We also have 
\begin{multline*}
  \Pp_{\Mc} (A) \Pp_{\Mc}(B)
  = \E \bar{h}(\omega_0) \E \bar{g}(\omega_0)
  \\
  = \Big( \bar{h}(\varnothing) + \E (\bar{h}(\omega_0 ) - 1 ) \ONE_{\omega_0 \neq \varnothing}
  \Big)
  \Big( \bar{g}(\varnothing) + \E (\bar{g}(\omega_0 ) - 1 ) \ONE_{\omega_0 \neq \varnothing}
  \Big),
\end{multline*}
and therefore 
\begin{equation}\label{eq:11}
\Big|  \Pp_{\Mc} (A) \Pp_{\Mc}(B) - \bar{h}(\varnothing)\bar{g}(\varnothing)  \Big| \le 3 \Pp (
\omega_0 \neq \varnothing).
\end{equation}
So if we show that
\begin{equation}
\label{eq:14}
\limsup_{|z|_1 \to \infty} |z|_1^{\alpha-1}\Pp (\omega_0 \neq \varnothing) < \infty.
\end{equation}
then this and~(\ref{eq:13}), (\ref{eq:11}) will imply (\ref{eq:strong-mixing}).
Let $|z|_1 > 4N$, and without loss of generality assume $z^1\ge z^2>0$.
The limit~(\ref{eq:14}) is equivalent to
\begin{equation*}
\limsup_{z^1 \to \infty}  (z^1)^{\alpha-1}\mu (\Lambda_0) = \limsup_{z^1 \to \infty}  (z^1)^{\alpha-1}\mu \Big(  \mathrm{D}^{-1}(L_N) \cap \tilde{\theta}^z \mathrm{D}^{-1}
(L_N) \Big)  < \infty.
\end{equation*}
Since $\Lambda_0  \subset  \{\eta:  \sigma= 1,\   x^1 < -z^1+N, \  |x^2| \le N+1,\  x^1 + r\xi \ge -N\}$, 
we have 
\begin{align*}
  \mu(\Lambda_0)& \le 2(N+1) \int_{-\infty}^{-  z^1+ N} dx^1 \int_1^{\infty} \frac{\alpha}{ \xi^{\alpha+1}} \, d\xi
                  \int_{\frac{-N-x^1}{\xi}}^{+\infty} e^{-r}\, dr \\
                &= 2(N+1) \int_0^{\infty} dy \int_1^{\infty} \frac{\alpha}{ \xi^{\alpha+1}} \, d\xi e^{-\frac{y+z^1 - 2N}{\xi}}\\
                & = 2(N+1) \int_1^{\infty} \frac{\alpha}{ \xi^{\alpha}} e^{- \frac{z^1 - 2N}{\xi}}\, d\xi \\
                &\le 2(N+1)e^{2N} \int_1^{\infty} \frac{\alpha}{\xi^{\alpha}} e^{-\frac{z^1}{\xi}} \, d\xi  \\
                &= 2(N+1)e^{2N} (z^1)^{1-\alpha} \int_{1/z^1}^{\infty} \frac{\alpha}{\tilde{\xi}^{\alpha}} e^{-\frac{1}{\tilde{\xi}}} \, d\tilde{\xi} \qquad (\xi = z^1\cdot \tilde{\xi}),\\
  & \le (z^1)^{1-\alpha} \cdot 2(N+1)e^{2N} \int_0^{\infty} \frac{\alpha}{\tilde{\xi}^{\alpha}} e^{-\frac{1}{\tilde{\xi}}} \, d\tilde{\xi},
\end{align*}
which implies~(\ref{eq:14}).  
This completes the proof.
\end{proof}

\section{Long-term behavior of integral curves}
\label{sec:flow-asymptotics}

For  $\eps>0$ and $L\ge 1$, let $E_{\eps,L}$ be the event that $\tilde{v} = (1,0)$ on $[0,L] \times [a-1,a]$ for some $1 \le a \le L$.
Since $v = \rho * \tilde{v}$ and the smooth kernel $\rho$ is supported on $[-1/3,0]^2$,
on the event $E_{\eps,L}$, $v \equiv (1,0)$ on $[0,L] \times [a-1/3,a]$, and hence for some $t_0>L$, $\gamma_{(0,0)}(t_0) \in  \{ L \}\times [0,\eps L]$, which implies
$\frac{\gamma^2_{(0,0)}(t_0)}{ \gamma^1_{(0,0)}(t_0)} \le \eps$.
Our proof of Theorem~\ref{th:no-average-slope} is based on the following lower bound of the probability $\Pp (E_{\eps,L})$.
\begin{lemma}
  \label{lem:liminf-E-eps-L}
  For every $\eps>0$, 
\begin{equation*}
\liminf_{L \to \infty} \Pp (E_{\eps,L}) > 0.
\end{equation*}
\end{lemma}

\begin{proof}[Derivation of Theorem~\ref{th:no-average-slope} from Lemma~\ref{lem:liminf-E-eps-L}]
      Due to the construction of~$\tilde v$ and $v$ in Section~\ref{sec:construction}, 
      $\tilde v$
    satisfies~\eqref{eq:up-right} for all $z\in\R$ and hence so does $\tilde{v}$.
    Clearly, $v$ is bounded, $C^{\infty}$-smooth, and
    \eqref{eq:trajects-go-to-infty} holds for all starting points~$z\in\R^2$.
    It remains to check~\eqref{eq:integral-curves-no-direction}

    By Fatou's Lemma and Lemma~\ref{lem:liminf-E-eps-L}, 
\begin{equation*}
\Pp \Big(  \{ E_{\eps,n}, \text{ i.o.\ in}\ n \} \Big) = \E \limsup_{n \to \infty} \ONE_{E_{\eps,n}} \ge \limsup_{n \to \infty} \Pp (E_{\eps,n}) > 0.
\end{equation*}
Hence, there exists an event with positive probability on which $\gamma_{(0,0)}(t_n) \in \{ n \}\times [0,\eps n] $ for infinitely many 
pairs $(n,t_n)\in\N\times\R$ satisfying~$t_n >n$, and hence for $z = (0,0)$,
\begin{equation}\label{eq:2}
\liminf_{t \to \infty} \frac{\gamma^2_{z}(t)}{ \gamma^1_z(t)} \le \eps.
\end{equation}

By ergodicity, with probability $1$,  for every $y \in \R^2$, there exists $z \in \Z^2$ with $z^1 < y^1$ and $z^2>y^2$ such that (\ref{eq:2}) holds for $z$.
Since integral curves do  not intersect, we must have
\begin{equation*}
  \liminf_{t \to \infty} \frac{\gamma_y^2(t)}{\gamma_y^1(t)}
  \le \liminf_{t \to \infty} \frac{\gamma_z^2(t)}{\gamma_z^1(t)} \le \eps.
\end{equation*}
This means that with probability $1$, (\ref{eq:2}) holds for all $z \in \R^2$.  Since $\eps$ is arbitrary, with probability $1$, 
\begin{equation*}
\liminf_{t \to \infty} \frac{\gamma^2_{z}(t)}{ \gamma^1_z(t)} = 0, \quad z \in \R^2.
\end{equation*}
By symmetry, with probability $1$,  
\begin{equation*}
\limsup_{t \to \infty} \frac{\gamma^1_{z}(t)}{ \gamma^2_z(t)} = 0,\quad z \in \R^2.
\end{equation*}
This proves \eqref{eq:integral-curves-no-direction}  and completes the proof of the theorem.
\end{proof}

In the rest of this section we will prove Lemma~\ref{lem:liminf-E-eps-L}.

Let $M=[0,A]\times[0,B]$ be a box and $D = [x^1,x^1+r\xi]\times [x^2,x^2+1]$ be a horizontal domain of influence.
We consider the following two relations between $D$ and $M$:
\begin{itemize}
\item we say that $D$ ``crosses'' $M$ if $D \cap M = [0,A] \times [x^2,x^2+1]$;
\item we say that $D$ ``intersects'' $M$ if $D \cap M \neq \varnothing$.  
\end{itemize}
Similar definitions of crossing and intersection apply to vertical domains of influence as well.
Given a box $M$ and strength~$\zeta\ge 1$,  the numbers of horizontal domains with strength at least~$\zeta$  crossing or intersecting $M$  are Poisson r.v.'s.
We denote their intensities by $I_1(A,B,\zeta)$ and $I_2(A,B,\zeta)$, computed below.

For $D=\mathrm{D}(x^1,x^2,r,\xi,\sigma)$ to cross $M=[0,A]\times[0,B]$ and have strength at least~$\zeta$, we have $x^1 \le 0$, $x^2 \in [0,B-1]$, $\xi \ge \zeta$, $x^2+r\xi \ge A$ and $\sigma=1$. Then
\begin{equation*}
  \begin{split}
    I_1(A,B,\zeta) & = \frac{1}{2}  \int_{\zeta}^{\infty} d\xi\int_0^{B-1} dx^2 \int_{-\infty}^0 dx^1 \int_{x^1+r\xi\ge A} dr \cdot \frac{\alpha e^{-r}}{\xi^{\alpha+1}} \\
    & = \frac{1}{2} (B-1)  \int_{\zeta}^{\infty} \frac{\alpha}{ \xi^{\alpha+1}}\,  d\xi  \int_{-\infty}^0 dx^1  e^{- \frac{A-x^1}{\xi}} \\
    & = \frac{\alpha}{2} (B-1) \int_{\zeta}^{\infty} \xi^{-\alpha} e^{-A\xi^{-1}}\, d\xi\\
    & =  \frac{\alpha}{2} (B-1) G_A(\zeta),
  \end{split}
\end{equation*}
where $G_A(\xi) = \int_{\xi}^{\infty} t^{-\alpha} e^{-A t^{-1}}\,dt$.
If~$A=0$, then the box~$M$ is the line segment $\{ 0 \}\times [0,B]$, and  $G_0(\xi) = \frac{1}{\alpha-1} \xi^{-(\alpha-1)}$.
For $A>0$, by a change of variables, $G_A(\xi) = A^{-(\alpha-1)}G_1(A^{-1}\xi)$ and  $G_1(\xi)\sim \frac{1}{\alpha-1} \xi^{-(\alpha-1)}$ as $\xi \to \infty$.
In summary, we have
\begin{equation}\label{eq:asym-I-1}
I_1(A,B,\zeta) 
\begin{cases}
=  \frac{\alpha}{2(\alpha-1)}(B-1)\zeta^{-(\alpha-1)}, & A=0,  \\
\sim \frac{\alpha}{2(\alpha-1)}(B-1)\zeta^{-(\alpha-1)},   & A>0, \ \zeta/A \to \infty.\\
\end{cases}
\end{equation}

For $D=\mathrm{D}(x^1,x^2,r,\xi,\sigma)$ to intersect $M=[0,A]\times[0,B]$ and have strength at least $\zeta$, there are two cases: if $x^1<0$, then $D$ must cross the line segment
$\{ 0 \} \times [-1,B+1]$, the intensity of this part is given by $I_1(0,B+2,0)$ as computed above; if $x^1>0$, then $D$ intersects $M$ if and only if $(x^1,x^2) \in [0,A]\times[-1,B]$, regardless of $r$.
Hence, 
\begin{equation}\label{eq:I-2}
  \begin{split}
    I_2(A,B,\zeta)&  = I_1(0,B+2,0) + \frac{1}{2} \int_0^A dx^1 \int_{-1}^B dx^2 \int_{\zeta}^{\infty} \frac{\alpha \, d\xi}{\xi^{\alpha+1}} \\
    &= \frac{\alpha}{2(\alpha-1)}(B+1)\zeta^{-(\alpha-1)} + \frac{1}{2}A(B+1)\zeta^{-\alpha}.
  \end{split}
\end{equation}

To estimate $\Pp (E_{\eps,L})$, let us consider the event~$C$ where there exists a horizontal domain of influence with strength at least $L^{(\alpha-1)^{-1}}$ crossing the box $M = [0,L]\times [0,\eps
L]$ (the choice of the threshold $\zeta = L^{(\alpha-1)^{-1}}$  ensures that $\lim_{L \to \infty}I_1(L,\eps L, \zeta) \in (0,\infty)$; although we do not use this explicitly in the proof, it is an important part of the construction).
On $C$, let  $D_0 = \mathrm{D}(X^1,X^2,R,\Xi,1)$ be one of such domains with the highest strength and 
let $F \subset C$ be the event where there is no vertical domain with strength at least $\Xi$ intersecting $D_0$.
Clearly, we have 
\begin{equation*}
\Pp (E_{\eps,L}) \ge \Pp (F) = \Pp (C) \Pp (F|C).
\end{equation*}

Conditioned on $C$, the probability of $F$ does not depend on $R$ and $X^1$, since we are considering the intersection with the box $[X^2,X^2+1] \times [0,L]$.
By translational invariance in the $x^2$-coordinate, and the independence between $X^2$ and the vertical domains, the conditional probability does not depend on $X^2$.
Therefore, the conditional probability depends on $\Xi$ only; in fact, it equals $e^{-I_2(1,L,\Xi)}$, i.e., the probability that the number of vertical domains with strength at least~$\Xi$ intersecting $[0,L]\times [0,1]$ is zero.
Hence, we can write
\begin{equation}\label{eq:prob-F}
 \Pp (E_{\eps,L})\ge  \Pp (C) \int_{L^{(\alpha-1)^{-1}}}^{\infty} d\xi \, q(\xi) e^{-I_2(1,L,\xi)},
\end{equation} 
where $q(\cdot)$ is the conditional density of $\Xi$ on $C$.

We have 
\begin{equation}
  \begin{split}
      q(\xi) &=- \frac{d}{d\xi}\Big[ \frac{\Pp (\Xi > \xi)
  }{ \Pp (C)}
  \Big]
=   - \frac{d}{d\xi}\Big[   \frac{1 - e^{-I_1(L, \eps L, \xi)}
  }{ \Pp (C)}
  \Big]\\
&  \ge  [\Pp(C)]^{-1} c\eps L \xi^{-\alpha} e^{-L \xi^{-1}} e^{ - I_1(L,\eps L, \xi)}.
  \end{split}\label{eq:q}
\end{equation}
Here and below, $c$, $c_i$ stand for some constants depending on~$\alpha$, $\eps$ but not on $L\ge 1$, and may change from place to place.
Combining~(\ref{eq:I-2}), (\ref{eq:prob-F}), (\ref{eq:q}), we have
\begin{equation}\label{eq:3}
    \Pp (E_{\eps,L})   \ge      c_1\eps L \int_{L^{(\alpha-1)^{-1}}}^{\infty} \xi^{-\alpha} e^{-L \big( c_2\xi^{-1} +c_3\xi^{-(\alpha-1)} \big) - I_1(L, \eps L, \xi)} \, d\xi.
  \end{equation}
  If $\alpha<2$, then $L^{(\alpha-1)^{-1}}/L \to \infty$, and by~(\ref{eq:asym-I-1}), $I_1(L, \eps L, \xi) \le c L\xi^{-(\alpha-1)}$, so 
  the right-hand side of (\ref{eq:3}) is at least
\begin{equation*}
\begin{split}
     c_1 \eps L \int_{L^{(\alpha-1)^{-1}}}^{\infty} \xi^{-\alpha} e^{-L \big(  c_2\xi^{-1}  + c_3\xi^{-(\alpha-1)} \big)} \, d\xi 
     &     = c_1\eps  \int_1^{\infty} \zeta^{-\alpha} e^{- c_2L^{\frac{\alpha-2}{\alpha-1}} \zeta^{-1}  - c_3 \zeta^{-(\alpha-1)}} \, d\zeta,\\
     & \to  c_1\eps  \int_1^{\infty} \zeta^{-\alpha} e^{ - c_3 \zeta^{-(\alpha-1)}} \, d\zeta > 0, \quad L \to \infty,
\end{split}
\end{equation*}
where we used the change of variables $\xi = L^{(\alpha-1)^{-1}} \zeta$.
If $\alpha=2$, then $I_1(L,\eps L,\xi) \le c G_1(L^{-1}\xi)$.  With the change of variables $\xi=L\zeta$ the right hand side of (\ref{eq:3}) is at least
\begin{equation*}
c_1\eps  \int_1^{\infty} \zeta^{-\alpha} e^{- (c_2+c_3) \zeta^{-1}    - c_4 \eps G_1(\zeta)} \, d\zeta > 0.
\end{equation*}
This completes the proof of Lemma~\ref{lem:liminf-E-eps-L}.

\section{Discussion}\label{sec:discussion}
In this section, we compare our construction with the example in \cite{Ziliotto:MR3684310}, and discuss our choice of the exponential distribution for the length variable $r$ and the Pareto
distribution for the strength variable $\xi$.

Both \cite{Ziliotto:MR3684310} and our construction place long horizontal and vertical corridors on the plane, whose length is proportional to the strength.
The length in \cite{Ziliotto:MR3684310} also has a heavy tail distribution, as seen from the following computation:
corridors of length $10T_k$ has strength $T_k=4^k$, which occurs with intensity $T^{-2}_k$ on the $\Z^2$-lattice, so 
\begin{equation*}
\Pp (\mathrm{length} \ge 10 T_k) \approx c \sum_{k' \ge k} T_{k'}^{-2} \approx cT_k^{-2}.
\end{equation*}
This corresponds to $\alpha=2$ in our construction.

We extend the range of $\alpha$ to $(1,2]$, which is sharp in the following sense.
First, $\alpha>1$ is needed for the strong mixing (Lemma~\ref{lem:strong-mixing}). Second, following the computation in Section~\ref{sec:flow-asymptotics}, when $\alpha>2$, one
has $\lim_{n \to \infty} I_1(L,\eps L, 1) = 0$ and hence $\Pp(C)=0$, so it is impossible to have a long corridor of length $L$ that is $\eps L$ close to the origin, and the argument in the present
paper and \cite{Ziliotto:MR3684310} will fail.

Another difference between our construction and \cite{Ziliotto:MR3684310} is that the ratio between the length and strength is not a constant $10$, but is given by independent exponential variables.
In addition to simplifying some computations, another motivation to introduce the exponential variable is to use its memoryless effect to establish the following kind of Markov property from the
point of view of trajectories.
Roughly speaking, let $A$ be a Borel subset of $\R_{\ge 0}^2$; then for a deterministic functional on $\mathcal{B}(\R^2) \times \R_{\ge 1}$
\begin{equation}\label{eq:4}
\Pp \Big( \gamma_z (t + \cdot) - \gamma_z(t) \in A\, \big|\  \gamma_z(t) = x, \phi(x) = \xi \Big) = F(A, \xi).
\end{equation}
The time~$t$ can also be replaced by a suitably defined 2-dimensional ``stopping time'' with respect to an appropriate filtration.

Using (\ref{eq:4}), for $\alpha<2$, we obtained an interesting picture for a typical integral curve $\gamma_z$ in an earlier version of this paper written independently of~\cite{Ziliotto:MR3684310}.
There exists $T_z>0$ such that when $t>T_z$, $\phi \big( \gamma_z(t) \big)$ takes values in an increasing sequence $\xi_1$, $\xi_2$, $\cdots$.  The integral curve stays in each long corridor with strength $\xi_i$ for an amount of time proportional to $\xi_i$, and $(\xi_i)$ forms a Markov chain with explicit transition density.
Another intuition to choose heavy tail distribution for the strength is that the sum of \iid  heavy tail random variables is dominated by their maximum (\cite{DarInfluenceMaximumTerm1952}).

\bibliographystyle{alpha}
\bibliography{Burgers}

\begin{thebibliography}{JESVT18}

\bibitem[ADH17]{AHDbook:MR3729447}
Antonio Auffinger, Michael Damron, and Jack Hanson.
\newblock {\em 50 years of first-passage percolation}, volume~68 of {\em
  University Lecture Series}.
\newblock American Mathematical Society, Providence, RI, 2017.

\bibitem[Bak16]{kickb:bakhtin2016}
Yuri Bakhtin.
\newblock Inviscid {B}urgers equation with random kick forcing in noncompact
  setting.
\newblock {\em Electron. J. Probab.}, 21:50 pp., 2016.

\bibitem[BCK14]{BCK:MR3110798}
Yuri Bakhtin, Eric Cator, and Konstantin Khanin.
\newblock Space-time stationary solutions for the {B}urgers equation.
\newblock {\em J. Amer. Math. Soc.}, 27(1):193--238, 2014.

\bibitem[BL20]{Bakhtin-Li-weakly-mixing}
Yuri Bakhtin and Liying Li.
\newblock Weakly mixing smooth planar vector field without asymptotic
  directions.
\newblock {\em Proceedings of the American Mathematical Society},
  148(11):4733--4744, November 2020.

\bibitem[BZZ06]{Bramson-Zeitouni-Zerner:MR2243869}
Maury Bramson, Ofer Zeitouni, and Martin P.~W. Zerner.
\newblock Shortest spanning trees and a counterexample for random walks in
  random environments.
\newblock {\em Ann. Probab.}, 34(3):821--856, 2006.

\bibitem[CK16]{2016arXiv161200434C}
J.~{Chaika} and A.~{Krishnan}.
\newblock {Stationary random walks on the lattice}.
\newblock {\em ArXiv e-prints}, December 2016.

\bibitem[CP11]{CaPi}
Eric Cator and Leandro~P.R. Pimentel.
\newblock A shape theorem and semi-infinite geodesics for the {H}ammersley
  model with random weights.
\newblock {\em ALEA}, 8:163--175, 2011.

\bibitem[CS13]{Cardaliaguet-Souganidis:MR3084699}
Pierre Cardaliaguet and Panagiotis~E. Souganidis.
\newblock Homogenization and enhancement of the {$G$}-equation in random
  environments.
\newblock {\em Comm. Pure Appl. Math.}, 66(10):1582--1628, 2013.

\bibitem[Dar52]{DarInfluenceMaximumTerm1952}
D.~A. Darling.
\newblock The {{Influence}} of the {{Maximum Term}} in the {{Addition}} of
  {{Independent Random Variables}}.
\newblock {\em Transactions of the American Mathematical Society},
  73(1):95--107, 1952.

\bibitem[DVJ03]{daley_introduction_2003}
Daryl~J. Daley and D.~Vere-Jones.
\newblock {\em An introduction to the theory of point processes}.
\newblock Springer, New York, 2nd ed edition, 2003.

\bibitem[FFZ21]{FFZExampleFailureStochastic2021}
William~M. Feldman, Jean-Baptiste Fermanian, and Bruno Ziliotto.
\newblock An example of failure of stochastic homogenization for viscous
  {{Hamilton-Jacobi}} equations without convexity.
\newblock {\em Journal of Differential Equations}, April 2021.

\bibitem[FS17]{FSHomogenizationNonhomogenizationCertain2017}
William~M. Feldman and Panagiotis~E. Souganidis.
\newblock Homogenization and non-homogenization of certain non-convex
  {{Hamilton}}\textendash{{Jacobi}} equations.
\newblock {\em Journal de Math\'ematiques Pures et Appliqu\'ees},
  108(5):751--782, November 2017.

\bibitem[HM95]{Haggstrom-Meester:MR1379157}
Olle H\"aggstr\"om and Ronald Meester.
\newblock Asymptotic shapes for stationary first passage percolation.
\newblock {\em Ann. Probab.}, 23(4):1511--1522, 1995.

\bibitem[HN01]{HoNe}
C.~Douglas Howard and Charles~M. Newman.
\newblock Geodesics and spanning trees for {E}uclidean first-passage
  percolation.
\newblock {\em Ann. Probab.}, 29(2):577--623, 2001.

\bibitem[JESVT18]{Jing-Souganidis-Tran:MR3817561}
Wenjia Jing, Panagiotis E.~Souganidis, and Hung V.~Tran.
\newblock Large time average of reachable sets and {A}pplications to
  {H}omogenization of interfaces moving with oscillatory spatio-temporal
  velocity.
\newblock {\em Discrete Contin. Dyn. Syst. Ser. S}, 11(5):915--939, 2018.

\bibitem[LN96]{Licea1996}
Cristina Licea and Charles~M. Newman.
\newblock {Geodesics in two-dimensional first-passage percolation}.
\newblock {\em Annals of Probability}, 24(1):399--410, 1996.

\bibitem[NN11]{Nolen-Novikov:MR2815685}
James Nolen and Alexei Novikov.
\newblock Homogenization of the {G}-equation with incompressible random drift
  in two dimensions.
\newblock {\em Commun. Math. Sci.}, 9(2):561--582, 2011.

\bibitem[RT00]{Rezakhanlou-Tarver:MR1756906}
Fraydoun Rezakhanlou and James~E. Tarver.
\newblock Homogenization for stochastic {H}amilton-{J}acobi equations.
\newblock {\em Arch. Ration. Mech. Anal.}, 151(4):277--309, 2000.

\bibitem[Sou99]{Souganidis:MR1697831}
Panagiotis~E. Souganidis.
\newblock Stochastic homogenization of {H}amilton-{J}acobi equations and some
  applications.
\newblock {\em Asymptot. Anal.}, 20(1):1--11, 1999.

\bibitem[W{\"u}t02]{Wu}
Mario~V. W{\"u}thrich.
\newblock Asymptotic behaviour of semi-infinite geodesics for maximal
  increasing subsequences in the plane.
\newblock In {\em In and out of equilibrium ({M}ambucaba, 2000)}, volume~51 of
  {\em Progr. Probab.}, pages 205--226. Birkh\"auser Boston, Boston, MA, 2002.

\bibitem[Zil17]{Ziliotto:MR3684310}
Bruno Ziliotto.
\newblock Stochastic homogenization of nonconvex {H}amilton-{J}acobi equations:
  a counterexample.
\newblock {\em Comm. Pure Appl. Math.}, 70(9):1798--1809, 2017.

\end{thebibliography}

\end{document}